\documentclass[12pt]{amsart}
\usepackage[all]{xy}
\usepackage{amssymb}
\usepackage{amsthm}
\usepackage{hyperref}
\hypersetup{colorlinks=true,linkcolor=blue,citecolor=magenta}
\usepackage{amsmath}
\usepackage{amscd,enumitem}
\usepackage{verbatim}
\usepackage{eurosym}
\usepackage{float}
\usepackage{color}
\usepackage{dcolumn}
\usepackage[mathscr]{eucal}
\usepackage[all]{xy}
\usepackage{hyperref}
\usepackage{bbm}
\usepackage[textheight=8.3in, textwidth=6.5in]{geometry}
\newtheorem*{thm*}{Theorem}
\newtheorem*{conj*}{Conjecture}
\newtheorem{theorem}{Theorem}

\newtheorem*{remark}{Remark}

\newtheorem*{example}{Example}
\newtheorem{corollary}[theorem]{Corollary}
\newcommand{\ord}{\mathrm{ord}}

\newcommand{\Q}{\mathbb{Q}}

\newcommand{\N}{\mathbb{N}}

\newcommand{\s}{\operatorname{sc}_7}
%\numberwithin{equation}{section}

\begin{document}
\title{Class numbers and self-conjugate 7-cores}
\author{Ken Ono and Wissam Raji}
\address{Department of Mathematics, University of Virginia, Charlottesville, VA 22904}
\email{ken.ono691@virginia.edu}
\address{Department of Mathematics, American University of Beirut, Beirut, Lebanon}
\email{wr07@aub.edu.lb}

\thanks{The first author thanks the support of the Thomas Jefferson Fund and the National Science Foundation
(DMS-1601306 ).The second author thanks the support of the Center for Advanced Mathematical Sciences (CAMS) at the American University of Beirut.}
\keywords{Class numbers, partitions}

\begin{abstract}
We investigate $\s(n)$, the number of self-conjugate $7$-core partitions of size $n$.
It turns out that $\s(n)=0$ for $n\equiv 7\pmod 8$.
For $n\equiv 1, 3, 5\pmod 8$, with $n\not \equiv 5\pmod 7,$ 
we find that $\s(n)$ is essentially a Hurwitz class number. 
Using recent work of Gao and Qin, we show that
$$
\s(n) =  2^{-\varepsilon(n)-1}\cdot  H(-D_n),
$$         
where $-D_n:=-4^{\varepsilon(n)}(7n+14)$ and $\varepsilon(n):=\frac{1}{2}\cdot(1+(-1)^{\frac{n-1}{2}})$.
This fact implies several corollaries which are of interest. For example, if $-D_n$ is a fundamental discriminant and
$p\not \in \{2, 7\}$ is a prime with $\ord_p(-D_n)\leq 1$, then
 for every positive integer $k$ we have
\begin{equation}\label{star}
\s\left((n+2)p^{2k}-2\right)=\s(n)\cdot \left(1+\frac{p^{k+1}-p}{p-1}-\frac{p^k-1}{p-1}.\left(\frac{-D_n}{p}\right)\right),
\end{equation}
where $\left(\frac{-D_n}{p}\right)$ is the Legendre symbol.
\end{abstract}
\maketitle
\section{Introduction and statement of results}

A {\it partition} of a non-negative integer $n$ is any nonincreasing sequence of positive integers which sum to $n$. The
partition function $p(n)$, which counts the number of partitions of $n$, has been studied extensively in number theory.
In particular, Ramanujan proved that
\begin{displaymath}
\begin{split}
p(5n+4)&\equiv 0\pmod 5,\\
p(7n+5)&\equiv 0\pmod 7,\\
p(11n+6)&\equiv 0\pmod{11}.
\end{split}
\end{displaymath}

Partitions also play a significant role in representation theory (for example, see \cite{KerberJames}). Indeed, partitions of size $n$ are used
to define {\it Young tableaux}, and their combinatorial properties encode the representation theory of the symmetric group $S_n$.
Moreover, the $t$-{\it core partitions} of size $n$ play an important role in number theory (for example, see \cite{G, GranvilleOno, OnoSze}) and the modular representation theory of $S_n$ and $A_n$ (for example, see
Chapter 2 of \cite{KerberJames}, and \cite{FongSrinivasan, GranvilleOno}). Recall that a partition is a $t$-core if none of the hook numbers of
its Ferrers-Young diagram are multiples of $t$. If $p$ is prime, then the existence of a $p$-core of size $n$ is equivalent to the existence
of a defect 0 $p$-block for both $S_n$ and $A_n$.

For positive integers $t$, we let $c_t(n)$ denote the number of $t$-core partitions of size $n$. If $t\in \{2,3\}$, then it is well-known that
$c_t(n)=0$ for {\it almost all} $n\in \N.$ However, if $t\geq 4$, then $c_t(n)>0$ for every positive integer $n$ (see \cite{GranvilleOno}).

The case where $t=4$ is particularly interesting, as these partitions arise naturally in algebraic number theory. As usual, let $H(-D)$ denote the discriminant $-D<0$ {\it Hurwitz class number}. For the discriminants considered here, $H(-D)$ is
the number of inequivalent (not necessarily primitive) positive definite binary quadratic forms with discriminant $-D<0$.
In particular, if $-D\not \in \{-3, -4\}$ is a fundamental discriminant, then $H(-D)$ is the class number of the imaginary quadratic field
$\Q(\sqrt{-D})$. Sze and the first author proved (see Theorem 2 of \cite{OnoSze}) that
\begin{displaymath}
c_4(n)=\frac{1}{2}H(-32n-20).
\end{displaymath}
\noindent Furthermore, for primes $p$ and $N\in \N$ with $\ord_p(N)\leq 1$, they proved (see Corollary 2 of \cite{OnoSze}) for positive integers $k$ that
\begin{equation}\label{mult}
c_4\left(\frac{N p^{2k}-5}{8}\right) = c_4\left(\frac{N-5}{8}\right)\cdot
\left(1+\frac{p^{k+1}-p}{p-1}-\frac{p^k-1}{p-1}\cdot \left(\frac{-N}{p}\right)\right),
\end{equation}
where $\left(\frac{-N}{p}\right)$ is the Legendre symbol. These formulas implied earlier conjectures
of Hirschhorn and Sellers \cite{HirschhornSellers}.

Further relationships between integer partitions and class numbers are expected to be extremely rare.
In this note we find one more instance where $t$-cores and class numbers are intimately related.
To this end, we let
${\text {\rm sc}}_t(n)$ denote number of the {\it self-conjugate} $t$-core partitions of size $n$. These are $t$-core partitions which
are symmetric with respect to the operation which switches the rows and columns of a Ferrers-Young diagram.

To formulate these results, for a positive odd integer $n$ we define the negative discriminant
\begin{equation}\label{Dn}
-D_n:=\begin{cases} -28n-56 \ \ \ \ \ &{\text {\rm if}}\ n\equiv 1\pmod 4,\\
                                 -7n-14 \ \ \ \ \ &{\text {\rm if}}\ n\equiv 3\pmod 4.
                                 \end{cases}
\end{equation}

\begin{theorem}\label{theorem1}
If $n\not \equiv 5\pmod{7}$ is a positive odd integer, then we have
$$
\s(n)=
\begin{cases}
\frac{1}{4}H(-D_n)\ \ \ \  \ \ \ &{\text {\rm if}}\ n\equiv 1  \pmod 4\\
\frac{1}{2}H(-D_n) \ \ \ \ \ &{\text {\rm if}}\ n\equiv 3  \pmod 8\\
0 \ \ \ \ \ \ \ \ \ \ \ \ \ \ \    &{\text {\rm if}}\ n\equiv 7  \pmod 8.
\end{cases}
$$
\end{theorem}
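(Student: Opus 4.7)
My plan is to combine the standard bijective description of self-conjugate $7$-cores with Gauss's three-squares class number formula. Using the Garvan--Kim--Stanton style parametrization that identifies $t$-cores of $n$ with vectors $\vec{n}=(n_0,\dots,n_{t-1})\in\Z^t$ satisfying $\sum n_i=0$ and $n=\tfrac{t}{2}\|\vec{n}\|^2+\sum_i i\,n_i$, the self-conjugacy condition $\lambda=\lambda'$ becomes $n_i=-n_{t-1-i}$. For $t=7$ this forces $n_3=0$ and leaves $(n_0,n_1,n_2)\in\Z^3$ as free parameters. Substituting and completing the square, the size equation becomes
\[
7n+14=(7n_0-3)^2+(7n_1-2)^2+(7n_2-1)^2,
\]
so $\s(n)$ counts the integer representations of $7n+14$ as $x^2+y^2+z^2$ with $x\equiv 4$, $y\equiv 5$, $z\equiv 6\pmod 7$.

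Next I would convert this restricted count into the full three-squares count $r_3(7n+14)$. Since the nonzero quadratic residues modulo $7$ are $\{1,2,4\}$, a short case analysis on $x^2+y^2+z^2\equiv 0\pmod 7$ shows that either $7\mid\gcd(x,y,z)$---which forces $49\mid 7n+14$, i.e.\ $n\equiv 5\pmod 7$, and is excluded---or else $\{x^2,y^2,z^2\}\equiv\{1,2,4\}\pmod 7$ as a multiset. In this non-degenerate case $|x|,|y|,|z|$ are nonzero and pairwise distinct, so the order-$48$ group $G=S_3\ltimes\{\pm 1\}^3$ acts freely on the representations of $7n+14$. Within each $G$-orbit the squares mod $7$ fix a unique permutation placing them in the pattern $(2,4,1)$, and then each sign is forced by the residue condition $(4,5,6)\pmod 7$, so every orbit contributes exactly one point. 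This gives
\[
48\,\s(n)=r_3(7n+14).
\]

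Finally, I would invoke Gauss's three-squares formula (in the refined form of Gao and Qin cited in the abstract) to translate $r_3(7n+14)$ into Hurwitz class numbers, splitting by $n\bmod 8$. For $n\equiv 7\pmod 8$ one has $7n+14\equiv 7\pmod 8$, so $r_3(7n+14)=0$ and $\s(n)=0$. For $n\equiv 3\pmod 8$ one has $7n+14\equiv 3\pmod 8$ and $r_3(7n+14)=24H(-(7n+14))=24H(-D_n)$, giving $\s(n)=\tfrac{1}{2}H(-D_n)$. For $n\equiv 1\pmod 4$ one has $7n+14\equiv 1\pmod 4$ and $r_3(7n+14)=12H(-4(7n+14))=12H(-D_n)$, giving $\s(n)=\tfrac{1}{4}H(-D_n)$. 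The main obstacle is the middle step---verifying that $n\not\equiv 5\pmod 7$ really rules out every degeneracy (zero coordinates or coincident absolute values) so that $G$ acts freely with the factor $48$ correct. Once this free action is established, the three cases of the theorem fall out from classical identities.
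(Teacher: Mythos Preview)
Your argument is correct and complete: the Garvan--Kim--Stanton parametrization with the self-conjugacy constraint does give $7n+14=(7n_0-3)^2+(7n_1-2)^2+(7n_2-1)^2$, and your residue analysis mod~$7$ is accurate (the nonzero squares $\{1,2,4\}$ sum to $0$ only as the full set, so for $n\not\equiv 5\pmod 7$ every representation has three nonzero coordinates with pairwise distinct squares mod~$7$, hence pairwise distinct absolute values, and the signed-permutation group acts freely). The identity $48\,\s(n)=r_3(7n+14)$ follows, and the classical Gauss relation between $r_3$ and Hurwitz class numbers finishes each residue class as you wrote.

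This is a genuinely different route from the paper's. The paper works with the generating function as a weight-$3/2$ form on $\Gamma_0(28)$, writes it as a combination of three ternary theta series $\Theta_1,\Theta_2,\Theta_3$, and then expresses each $\Theta_i$ in the Gao--Qin Eisenstein basis of $M_{3/2}(28,\chi_7)$ to extract the Hurwitz class numbers. Your approach bypasses the half-integral-weight Eisenstein machinery entirely: the single theta function $\theta^3$ and the classical Gauss formula suffice, and you obtain the clean intermediate identity $48\,\s(n)=r_3(7n+14)$, which the paper never isolates. One small point: you do not actually need the ``refined form of Gao and Qin'' you mention---the nineteenth-century Gauss identities $r_3(m)=12H(-4m)$ for $m\equiv 1,2\pmod 4$ and $r_3(m)=24H(-m)$ for $m\equiv 3\pmod 8$ are exactly what you use. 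Gao--Qin is what powers the paper's alternative modular-forms computation, not yours.
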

%\begin{remark}
%If $-D_n$ is a fundamental discriminant, then $H(-D_n)$ is the class number of $\mathbb{Q}(\sqrt{-D_n})$.
%Moreover, if $f\in \N$, then 
%\begin{equation}\label{HurwitzMult}
%H(-D_nf^2)=\frac{H(-D_n)}{\omega(-D_n)}\sum_{d\mid f}\mu(d)\left(\frac{-D_n}{d}\right)\sigma_1(f/d),
%\end{equation}
%where $\omega(-D_n)$ is half the number of units in $\mathbb{Q}(\sqrt{-D_n})$, and $\sigma_1(n)$ denotes the sum of the positive divisors of $n$.
%\end{remark}
\begin{example} 
Here we consider the case where $n=9$. According to Theorem~\ref{theorem1}, we have that $\s(9)=H(-308)/4.$
One readily finds that there are eight equivalence classes of discriminant $-308$ binary quadratic forms. The reduced forms 
representing these classes are:
\begin{displaymath}
\begin{matrix} 2X^2+2XY+39Y^2,&  3X^2-2XY+26Y^2, & 3X^2+2XY+26Y^2, & 6X^2-2XY+13Y^2,\\
6X^2+2XY+13Y^2,& 7X^2+11Y^2, & 9X^2-4XY+9Y^2,&  9X^2+4XY+9Y^2.
\end{matrix}
\end{displaymath}
There are fourteen $7$-core partitions of $n=9$.
However, only two of them are self-conjugate.
They are (subscripts are the hook numbers):
$$
\begin{matrix} \bullet_5 & \bullet_4 & \bullet_3\\
           \bullet_4 & \bullet_3 & \bullet_2\\
           \bullet_3 & \bullet_2 & \bullet_1
           \end{matrix}\ \ \ \ \ \ \ \ {\text {\rm and}}\ \ \ \ \ \ \ \ 
           \begin{matrix} \bullet_9 & \bullet_4 & \bullet_3 & \bullet_2 & \bullet_1\\
           \bullet_4\\ \bullet_3 \\ \bullet_2 \\ \bullet_1\end{matrix}.
$$           
 This example illustrates the conclusion that $\s(9)=2=H(-308)/4$.
\end{example}
\begin{example}
For $n=25$, we have that $-D_n=-756=-3^2\cdot 84$ and $H(-756)=16$. Therefore, Theorem~\ref{theorem1} implies that $\s(25)=H(-756)/4=4.$
\end{example}

Theorem~\ref{theorem1} implies simple short finite formulas for $\s(n)$. To this end, we recall the standard Kronecker character
for a discriminant $D$.
Define the Kronecker character $\chi_D(n)$ for positive integers $n$ by $$\chi_D(n)=\left(\frac{D}{n}\right):=\prod\left(\frac{D}{p_i}\right)^{a_i},$$
where $n=\prod p_i^{a_i}$ and $\left(\frac{D}{p}\right)$ is the Legendre symbol when $p$ is an odd prime and $$\left(\frac{D}{2}\right):=\begin{cases} 0 \ \ \ \ \ \ \ \ \ \ \ \ \ \ \ \ \  \ \ \ \ \  \mbox{if $D$ is even} \\ (-1)^{(D^2-1)/8} \ \ \ \ \ \ \ \ \mbox{if $D$ is odd}.\end{cases}$$

\begin{corollary}\label{FiniteFormula}
If $n\not \equiv 5\pmod 7$ is a non-negative odd integer for which $-D_n$ is a fundamental discriminant, then 
$$
\s(n)=
\begin{cases}
-\frac{1}{4D_n}  \sum_{m=1}^{D_n}\left(\frac{-D_n}{m}\right)m \ \ \ \ \  \ &{\text {\rm if}}\ n\equiv 1 \pmod 4,\\ \ \ \\
-\frac{1}{2 D_n} \sum_{m=1}^{D_n}\left(\frac{-D_n}{m}\right)m, \ \ \ \ \ \   &{\text {\rm if}}\  n\equiv 3  \pmod 8,\\ \ \ \\
0 \ \ \ \ \ \ \ \ \ \ \ \ \  \ \ \ \  \ \ \ \  \ \ \ \ \ \ \ \ \ \ \ \ \ \   &{\text {\rm if}}\ n\equiv 7 \pmod 8.\\
\end{cases}
$$
\end{corollary}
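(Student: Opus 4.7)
The plan is to deduce Corollary~\ref{FiniteFormula} directly from Theorem~\ref{theorem1} by substituting Dirichlet's classical finite formula for the class number of an imaginary quadratic field. Since Theorem~\ref{theorem1} already expresses $\s(n)$ as a simple rational multiple of the Hurwitz number $H(-D_n)$, it will suffice, in the two nontrivial residue classes of $n$, to rewrite $H(-D_n)$ as a character sum.

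First I would dispose of the case $n\equiv 7\pmod 8$, for which Theorem~\ref{theorem1} already delivers $\s(n)=0$, matching the third line of the stated formula. For the remaining cases I would note that the definition \eqref{Dn} gives $D_n\geq 35$ for every positive odd $n$, so $-D_n\notin\{-3,-4\}$ and the unit group of $\Q(\sqrt{-D_n})$ has order $w=2$. Under the hypothesis that $-D_n$ is a fundamental discriminant it then follows that $H(-D_n)=h(-D_n)$, the ordinary class number of $\Q(\sqrt{-D_n})$.

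Next I would invoke Dirichlet's identity
\[
h(-D)\;=\;-\frac{1}{D}\sum_{m=1}^{D-1}\left(\frac{-D}{m}\right)\cdot m \qquad (D>4,\ -D\text{ a fundamental discriminant}),
\]
applied with $D=D_n$. Since $\left(\frac{-D_n}{D_n}\right)=0$, the sum can be extended to $m=D_n$ without changing the value, producing exactly the shape of the sum in the statement. Combining this with the prefactor $\tfrac14$ (when $n\equiv 1\pmod 4$) or $\tfrac12$ (when $n\equiv 3\pmod 8$) supplied by Theorem~\ref{theorem1} yields the two nontrivial formulas of the corollary.

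The proof is therefore essentially a one-line substitution, so there is no real obstacle. The only point requiring care is bookkeeping: one must confirm that the Kronecker character $\left(\frac{-D_n}{m}\right)$ defined in the paragraph preceding the corollary agrees with the character appearing in Dirichlet's class number formula. Since both are obtained by completely multiplicative extension of the Kronecker symbol from the primes to the positive integers, and since the case $m=2$ in the definition is precisely the value forced on any real primitive character of conductor $|{-D_n}|$, this identification is automatic.
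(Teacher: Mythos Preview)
Your argument is correct and follows exactly the paper's approach: apply Dirichlet's class number formula to rewrite $H(-D_n)$ for a fundamental discriminant and then invoke Theorem~\ref{theorem1}. Your version is slightly more explicit in checking that $D_n\geq 35$ forces $|O_K^{\times}|=2$, but otherwise the two proofs are the same one-line substitution.
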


\begin{example}
If $n=11$, then $-D_{11}=-91$. We then find that
$$
\s(11)=-\frac{1}{182}\sum_{m=1}^{91}\left(\frac{-91}{m}\right)m=1.
$$
\end{example}

Hurwitz class numbers enjoy a host of multiplicative properties which can be formulated in terms of the M\"obius function $\mu(d)$ and
the divisor function $\sigma_1(n):=\sum_{1\leq d\mid n} d$.  These imply the following simple corollary which is analogous to (\ref{mult}).

\begin{corollary}\label{MultiplicativeFormula}
If $n\not \equiv 5\pmod 7$ is a positive odd integer, and $-D_n$ is a fundamental discriminant, then for all odd integers $f$ coprime to $7$, we have $$\s((n+2)f^2-2)=\s(n)\sum_{1\leq d\mid f}\mu(d)\left(\frac{-D_n}{d}\right)\sigma_1(f/d).$$
\end{corollary}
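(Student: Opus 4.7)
The plan is to combine Theorem~\ref{theorem1} with the classical identity for Hurwitz class numbers under multiplication of a fundamental discriminant by an odd square. Writing $m:=(n+2)f^2-2$, I aim to reduce the claim about $\s(m)/\s(n)$ to the equality $H(-D_m)/H(-D_n)=H(-f^2D_n)/H(-D_n)$, which is exactly the content of the Hurwitz identity.

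First I would check that Theorem~\ref{theorem1} applies in the same way to $n$ and to $m$. Since $f$ is odd, $f^2\equiv 1\pmod 8$, so $(n+2)f^2\equiv n+2\pmod 8$ and hence $m\equiv n\pmod 8$; in particular $m$ is a positive odd integer in the same residue class mod $8$ as $n$. The assumption $\gcd(f,7)=1$ together with $n\not\equiv 5\pmod 7$ yields $(n+2)f^2\not\equiv 0\pmod 7$, that is $m\not\equiv 5\pmod 7$. Using the definition (\ref{Dn}) together with the identity $m+2=(n+2)f^2$, one checks directly that $D_m=f^2D_n$. Letting $c\in\{\tfrac14,\tfrac12,0\}$ be the constant supplied by Theorem~\ref{theorem1} for the common residue class of $n$ and $m$ modulo $8$, one therefore has $\s(n)=c\cdot H(-D_n)$ and $\s(m)=c\cdot H(-f^2D_n)$; the case $n\equiv 7\pmod 8$ (where $c=0$) is handled trivially since both sides of the asserted identity vanish.

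The substantive input is the classical identity
\[
H(-Df^2)=H(-D)\sum_{d\mid f}\mu(d)\left(\frac{-D}{d}\right)\sigma_1(f/d),
\]
valid for any fundamental discriminant $-D<-4$ and any positive integer $f$. It is obtained by expanding $H(-Df^2)=\sum_{g\mid f}h(-Dg^2)$ using the standard class number formula for non-maximal orders, $h(-Dg^2)=h(-D)\cdot g\prod_{p\mid g}\bigl(1-\left(\frac{-D}{p}\right)/p\bigr)$, and swapping the order of summation over $g$ and its divisors $d$. Since $-D_n$ is fundamental by hypothesis and $D_n\geq 35$, this identity applies at $-D=-D_n$, and in combination with the reductions of the previous paragraph it delivers the corollary at once. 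I do not anticipate any real obstacle here: the entire argument is bookkeeping around Theorem~\ref{theorem1} and the well-known class number identity for orders of conductor $f$.
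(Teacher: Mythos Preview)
Your proposal is correct and follows essentially the same approach as the paper: both deduce the corollary from Theorem~\ref{theorem1} together with the classical Hurwitz class number identity $H(-Df^2)=H(-D)\sum_{d\mid f}\mu(d)\left(\tfrac{-D}{d}\right)\sigma_1(f/d)$ for fundamental $-D<-4$ (the paper cites this from Cohen with the factor $1/w(-D)$, which is $1$ here). You are simply more explicit than the paper about the bookkeeping that $m\equiv n\pmod 8$, $m\not\equiv 5\pmod 7$, and $D_m=f^2D_n$, all of which the paper leaves implicit.
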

\begin{remark}
The cases where $f=p^k$ is a prime power coincide with (\ref{star}).
\end{remark}
\begin{example}
If $n=11$, then we have that $-D_{11}=-91$. Now suppose that $f=15$. Corollary~\ref{MultiplicativeFormula} implies that
$$
\s(2923)=\s(11)\sum_{1\leq d\mid 15} \mu(d)\left(\frac{-91}{d}\right)\sigma_1(15/d).
$$
By direct calculation, since $\s(11)=1$,  the right hand side of this expression equals
$$
\sigma_1(15)+\sigma_1(5)-\sigma_1(3)-\sigma_1(1)=25.
$$
Using the $q$-series identities in the next section (for example (\ref{GenFcn})), one can indeed check directly that $\s(2923)=25$.
\end{example}

\section{Proofs}

Here we prove Theorem~\ref{theorem1} and Corollaries 2 and 3.

\begin{proof}[Proof of Theorem~\ref{theorem1}]
If $t$ is a positive odd integer, then the generating function for $\mathrm{sc}_t(n)$ (see (2) of \cite{Nath}) is
\begin{equation}\label{GenFcn}
\sum_{n=0}^{\infty} \mathrm{sc}_t(n)q^n=\prod_{n=1}^{\infty}\frac{(1-q^{2tn})^{\frac{t-1}{2}}(1+q^{2n-1})}{(1+q^{t(2n-1)})}.
\end{equation}
Therefore, in terms of Dedekind's eta-function $\eta(\tau):=q^{1/24}\prod_{n=1}^{\infty}(1-q^n)$, where $q:=e^{2\pi i \tau}$, we have that
$$S(\tau):=\sum_{n=0}^{\infty}\s(n)q^{n+2}=\frac{\eta(2\tau)^2\eta(14\tau)\eta(7\tau)\eta(28\tau)}{\eta(4\tau)\eta(\tau)}.$$ 
In particular, by the standard theory of modular forms (for example, see Chapter 1.4 of \cite{CBMS}), it follows
that $S(\tau)$ is a holomorphic modular form of weight 3/2 on $\Gamma_0(28)$ with Nebentypus
$\chi_{7}(n):=\left(\frac{7}{\bullet}\right)$.
In terms of theta functions, we have that\footnote{This corrects the statement of Theorem 9 of \cite{Alpoge}.}
$$S(\tau)=\frac{1}{14}\Theta_1(\tau)-\frac{1}{7}\Theta_2(\tau)+\frac{1}{14}\Theta_3(\tau),$$ 
where 
$$\Theta_1(\tau)=\sum_{(x,y,x)\in \mathbb{Z}^3}q^{Q_1(x,y,z)}=\sum_{(x,y,x)\in \mathbb{Z}^3}q^{x^2+y^2+2z^2-yz},$$ $$\Theta_2(\tau)=\sum_{(x,y,x)\in \mathbb{Z}^3}q^{Q_2(x,y,z)}=\sum_{(x,y,x)\in \mathbb{Z}^3}q^{x^2+4y^2+8z^2-4yz}$$ and $$\Theta_3(\tau)=\sum_{(x,y,x)\in \mathbb{Z}^3}q^{Q_3(x,y,z)}=\sum_{(x,y,x)\in \mathbb{Z}^3}q^{2x^2+2y^2+3z^2+2yz+2xz+2xy}.$$
As a result, we get that
\begin{equation}\label{RepNumbers}
\s(n)=\frac{1}{14}R(Q_1; n+2)-\frac{1}{7}R(Q_2; n+2)+\frac{1}{14}R(Q_3; n+2),
\end{equation}
where $R(Q_i; n+2)$ is the number of integral representations of $n+2$ by $Q_i(x,y,z)$.

To prove the theorem, we must relate these three theta functions to the Eisenstein series in
the space $M_{\frac{3}{2}}(28,\chi_7)$. Thankfully, Gao and Qin \cite{GaoQin} have
already carried out these calculations.
They produce three Eisenstein series which form a basis of this space (see Theorem 3.2 of \cite{GaoQin}).
These series are given by their Fourier expansions:
\begin{eqnarray*}
g_1(\tau)&=&1+2\pi \sqrt{7}\sum_{n=1}^{\infty}\lambda(7n,28)\alpha(7n)\left(A(7,7n)-\frac{1}{7}\right)\sqrt{n}q^n,\\ g_2(\tau)&=& \frac{2}{49}\pi\sqrt{7}\sum_{n=1}^{\infty}\lambda(7n,28)\alpha(7n)\sqrt{n}q^n,\\ g_3(\tau)&=&2\pi\sqrt{7}\sum_{n=1}^{\infty}\lambda(7n,28)\left(A(7,7n)-\frac{1}{7}\right)\sqrt{n}q^n.
\end{eqnarray*}
The quantities $\alpha(m), A(p,m), \lambda(7m,28)$, $h_p(m)$ and $h'_p(m)$ are  defined by
$$
\alpha(m):=
\begin{cases}
3\cdot 2^{-\frac{1+h_2(m)}{2}}\ \ \  \ \ &{\text {\rm if}} \ h_2(m) \ {\text {\rm is  odd}},\\
3\cdot 2^{-1-\frac{h_2(m)}{2}} \ \  \ \ &{\text {\rm  if}} \ h_2(m) \ {\text {\rm is  even  and}}\  h_2'(m)\equiv 1  \pmod 4,\\
0,\ \ \  \ \ \ \ \ \ \ \ \ \ \ \ \ \ \ &{\text {\rm if}} \ h_2(m) \ {\text {\rm  is even and}}\  \ h_2'(m)\equiv 3 \pmod 8,\\
2^{-\frac{h_2(m)}{2}}\ \ \  \ \ \ \ \ \ \ &{\text {\rm  if}} \ h_2(m) \ {\text {\rm is  even  and}} \ h_2'(m)\equiv 7 \pmod 8,\\
\end{cases}
$$
\\
\\
$$
A(p,m):=
\begin{cases}
p^{-1}-(1+p)p^{-\frac{3+h_p(m)}{2}} \ \ \ &{\text {\rm if}}\  h_p(m) \ {\text {\rm is  odd,}}\\
p^{-1}-2p^{-1-\frac{h_p(m)}{2}} \ \ \ &{\text {\rm  if}} \ h_p(m) \ {\text {\rm is  even  and}} \left(\frac{-h'_p(m)}{p}\right)=-1,\\
p^{-1}, \ \ \  &{\text {\rm if}} \ h_p(m) \ {\text {\rm  is  even  and}}\  \left(\frac{-h'_p(m)}{p}\right)=1,
\end{cases}
$$
where $h_p(m)$ is the non-negative integer for which $p^{h_p(m)}||m$ and $h'_p(m):=\frac{m}{p^{h_p(m)}}$, and
$$
\lambda(7m,28)=
\begin{cases}
\frac{49}{4\pi\sqrt{7m}}\cdot H(-7m), \ \ \ &{\text {\rm  if}} \ m\equiv 5 \pmod 8,\\
\frac{49}{12\pi\sqrt{7m}}\cdot H(-7m),  \ \ \ &{\text {\rm otherwise.}}
\end{cases}
$$
The formula for  $\alpha(m)$, when $h_2'(m)\equiv 3, 7 \pmod 8,$ corrects a typographical error in \cite{GaoQin}.

Since the three Eisenstein series form a basis of this space, it is trivial to deduce that
$$\Theta_1(\tau)=g_1(\tau)-3g_2(\tau),\  \Theta_2(\tau)=g_1(\tau)-\frac{3}{2}g_3(\tau),\  \Theta_3(\tau)=g_1(\tau)+14g_2(\tau).$$
 Using $h_7(7n)=1$, $h_7'(7n)=n$, $h_2(7n)=0$ and $h_2'(7n)=7n$ for $(n,14)=1$, we find that 
$$A(7,7n)=\frac{1}{7}-\frac{8}{49},$$ and
$$\alpha(7n):=\begin{cases}
\frac{3}{2}\ \ \ \   &{\text {\rm if}} \ n\equiv 3\pmod 4,\\
1 \ \ \ \ &{\text {\rm if}} \ n\equiv 1 \pmod 8,\\
0 \ \ \  &{\text {\rm if}}\  n\equiv 5 \pmod 8.
\end{cases}$$
Combining these facts, for positive integers $n$ we deduce that
$$R(Q_1; n)=2\pi\sqrt{7n}\lambda(7n,28)\left(A(7,7n)-\frac{1}{7}\right)(\alpha(7n)-3),$$ 
$$R(Q_2;n)= 2\pi\sqrt{7n}\lambda(7n,28)\left(A(7,7n)-\frac{1}{7}\right)(\alpha(7n)-\frac{3}{2}),$$ and $$R(Q_3;n)=2\pi\sqrt{7n}\lambda(7n,28)\alpha(7n)\left(A(7,7n)-\frac{1}{7}+\frac{14}{49}\right).$$ 
Finally, these expressions, for positive odd $n\not \equiv 5\pmod 7$, simplify to
$$
R(Q_1; n+2)=
\begin{cases}
2H(-D_n) \ \ \ &{\text {\rm if}}\ n\equiv 1 \pmod 4,\\
8H(-D_n) \ \ \ &{\text {\rm if}}\ n\equiv 3 \pmod 8,\\
4H(-D_n) \ \ \ &{\text {\rm if}}\ n\equiv 7 \pmod 8,
\end{cases}
$$
$$
R(Q_2; n+2)=
\begin{cases}
0 \ \ \ &{\text {\rm if}}\ n\equiv 1 \pmod 4,\\
2H(-D_n) \ \ \ &{\text {\rm if}}\ n\equiv 3 \pmod 8,\\
2H(-D_n) \ \ \ &{\text {\rm if}} \ n\equiv 7\pmod 8,
\end{cases}
$$

$$
R(Q_3; n+2)=
\begin{cases}
\frac{3}{2}H(-D_n) \ \ \ &{\text {\rm if}}\ n\equiv 1 \pmod 4,\\
3H(-D_n) \ \ \ &{\text {\rm if}}\ n\equiv 3\pmod 8,\\
0 \ \ \ &{\text {\rm if}}\ n\equiv 7 \pmod 8.
\end{cases}
$$
The claimed formulas now follow from (\ref{RepNumbers}).
\end{proof}

\begin{proof}[Proof of Corollary~\ref{FiniteFormula}] If $-D<0$ is a fundamental discriminant, then it is
well-known (for example, see (7.29) of \cite{Cox}) that
$$
H(-D)=-\frac{|O_K^{\times}|}{2D}\sum_{m=1}^{D} \left(\frac{-D}{m}\right)m,
$$
where $K=\Q(\sqrt{-D})$ and $O_K^{\times}$ denotes the units in its corresponding ring of integers.
The claim now follows from Theorem~\ref{theorem1}.
\end{proof}

\begin{proof}[Proof of Corollary~\ref{MultiplicativeFormula}] 
If $-D<0$ is a fundamental discriminant of an imaginary quadratic field, then for every integer $f$ it is known
 (p. 273, \cite{Cohen}) that
$$
H(-Df^2)=\frac{H(-D)}{w(-D)}\cdot\sum_{1\leq d\mid f}\mu(d)\left(\frac{-D}{d}\right)\sigma_1(f/d),
$$
where $w(-D)$ denotes half of the number of roots of unity in $\Q(\sqrt{-D})$.
The claim now follows from Theorem~\ref{theorem1}.

\end{proof}

%\section*{Acknowledgements} 

\end{document}